\newcommand\g{\gamma}
\renewcommand\d{\delta}
\renewcommand\l{\lambda}
\renewcommand\L{\Lambda}
\newcommand\G{\Gamma}
\newcommand\f{\frac}
\newcommand{\Z}{{\mathbb{Z}}}
\newcommand{\C}{{\mathbb{C}}}
\newcommand{\Q}{{\mathbb{Q}}}
\renewcommand\i{^{-1}}
\renewcommand\({\left(}
\renewcommand\){\right)}
\newcommand{\ignore}[1]{}
\newcommand{\myignore}[1]{}
\newcommand{\mymyignore}[1]{}
\newcommand{\bx}{\hfill$\square$\vspace{.6cm}}
\def\imod#1{\allowbreak\mkern5mu({\operator@font mod}\,#1)}
\newcommand\smallf[2]{{\textstyle{\frac{#1}{#2}}}}
\newcommand\srel[2]{\begin{smallmatrix} {#1} \\ {#2} \end{smallmatrix}}
\def\ds@numart{\@numarttrue
  \@takefromreset{figure}{chapter}%
  \@takefromreset{table}{chapter}%
  \@takefromreset{equation}{chapter}%
  \def\thesection{\@arabic\c@section}%
  \def\thefigure{\@arabic\c@figure}%
  \def\thetable{\@arabic\c@table}%
  \def\theequation{\thesection.\arabic{equation}}%
  \def\thesubequation{\arabic{equation}\alph{subequation}}
  \@addtoreset{equation}{section}}
\def\ds@book{\@numartfalse
\def\thesection{\thechapter.\@arabic\c@section}%
\def\thefigure{\thechapter.\@arabic\c@figure}%
\def\thetable{\thechapter.\@arabic\c@table}%
\def\theequation{\thesection.\arabic{equation}}%
\@addtoreset{section}{chapter}%
\@addtoreset{figure}{chapter}%
\@addtoreset{table}{chapter}%
\@addtoreset{equation}{section}}
\newcommand{\thmref}[1]{Theorem~\ref{#1}}
\begin{document}

\title*{Estimates on Eisenstein distributions for reciprocals of $p$-adic $L$-functions:~the case of irregular primes}

\author{Stephen Gelbart, Ralph Greenberg, Stephen D. Miller, and Freydoon Shahidi}

\institute{Stephen Gelbart \at Nicki and J. Ira Harris Professorial Chair, Department of Mathematics,
Ziskind Building, Room 256, Weizmann Institute of Science, Rehovot 76100 Israel, \email{steve.gelbart@weizmann.ac.il}
\and Ralph Greenberg \at Department of Mathematics, Box 354350,
University of Washington,
Seattle, WA 98195-4350, \email{greenber@math.washington.edu}
\newline{Greenberg is supported by NSF grant DMS-0200785.}
\and Stephen D. Miller \at Department of Mathematics,
Hill Center-Busch Campus,
Rutgers, The State University of New Jersey,
110 Frelinghuysen Rd,
Piscataway, NJ 08854-8019, \email{miller@math.rutgers.edu}
 \newline Miller is  supported
by NSF grant DMS-1500562.
\and
Freydoon Shahidi \at Department of Mathematics, Purdue University,
150 N. University Street, West Lafayette, IN 47907-2067, \email{shahidi@math.purdue.edu} \newline Shahidi is partially supported
by NSF grant  DMS-1500759.
}

\titlerunning{Estimates on Eisenstein distributions for reciprocals of $p$-adic $L$-functions}

\maketitle

\date{May 9, 2016}

\abstract{
We consider the $p$-adic distributions derived from Eisenstein series in \cite{GMPS}, whose Mellin transforms are reciprocals of the Kubota-Leopoldt  $p$-adic $L$-function.  These distributions were  shown there to be measures when $p$ is regular.  They fail to be measures when $p$ is irregular; in this paper we give quantitative estimates that describe their behavior more precisely.
\newline
\newline    
{\it To Roger Howe on the occasion of his 70th birthday}
\keywords{$p$-adic $L$-functions, Iwasawa algebra, Riemann zeta-function, irregular primes}}


\section{Introduction}
\label{sec:intro}

We first begin with a brief, elementary description of the content of this paper.  Let $\mu(\cdot)$ be the M\"obius $\mu$-function\footnote{$\mu(n)$ vanishes unless $n$ is squarefree, in which case it is $(-1)^{\omega(n)}$, where $\omega(n)$ is the number of prime divisors of $n$.}, and consider the sum
\begin{equation}\label{lowbrow}
 \pi^k\,\sum_{\srel{n\,\neq\,0}{p\nmid n}}\f{\mu(|n|)}{n^k}
    \sum_{j\,=\,0}^m p^{j(k-1)-mk}e^{2\pi i \bar{n} p^j b/p^m}
\end{equation}
for integers
 $k\ge 2$ and $m\ge 1$; here $\bar{n}$ denotes the modular inverse of $n\imod{p^m}$.  The sum (\ref{lowbrow}) actually defines a rational number, and appears in a $p$-adic distribution formed from Eisenstein series   in \cite{GMPS} (see (\ref{GMPSmeasure}) below).  The present paper gives a characterization of how many times $p$ divides the denominator of the rational number (\ref{lowbrow}).

The goal of \cite{GMPS} was to construct $p$-adic $L$-functions from Fourier coefficients of Eisenstein series, in which sums like (\ref{lowbrow}) occur.  Before describing this, let us first recall that
the values of the Riemann $\zeta$-function at negative odd integers are related to the classical Bernoulli numbers $B_k$ by the formula
\begin{equation}\label{zeta1-kandBk}
    \zeta(1-k) \ \ = \ \ (-1)^{k-1}\,\f{B_{k}}{k}\, , \ \ k\,=\,1,2,3,\ldots.
\end{equation}
The Bernoulli numbers $B_k$ are the values at $x=0$ of the Bernoulli polynomials $B_k(x)$, which are given by the formulas $B_0(x)=1$, $B_1(x)=x-\f 12$, and
\begin{equation}\label{Bkxdef}
    B_k(x) \ \ = \ \ -\f{k!}{(2\pi i)^k}\sum_{n\,\neq\,0}\f{e^{2\pi i n x}}{n^k} \ \ \ \text{for} \ k\,\ge\,2 \ \text{and} \ \ 0 \le x \le 1\,.
\end{equation}
For a prime number $p$, define a distribution\footnote{A $p$-adic distribution $\nu$ on $\Z_p^*$ is a finitely-additive, $\Q_p$-valued function on sets of the form  $b+p^m\Z_p$, $m\ge 1$; $\nu$ is furthermore a {\em $p$-adic measure} if $\nu$'s values are bounded under the $p$-adic valuation.} on $\Z_p^*$ by the formula $\mu_{B,k}(b+p^m\Z_p)=p^{m(k-1)}B_k(\f{b}{p^m})$ (for $0 \le b < p^m$ and $k\ge 1$).  For $c$ coprime to $p$ and $k\ge 1$, define the $p$-adic measure
\begin{equation}\label{Mazurmeasuredef}
    \mu_{k,c}(U) \ \ := \ \ \f{\mu_{B,k}(U) \ - \ c^{-k}\,\mu_{B,k}(cU)}{-k}
\end{equation}
on compact open subsets $U\subset \Z_p^*$.

Throughout this  paper we have fixed embeddings of $\overline{\Q}$ into $\C$ and $\overline{\Q}_p$, so that algebraic numbers in $\C$ can then be regarded as being in $\overline{\Q}_p$.  Consider a Dirichlet character $\chi$ whose conductor is a power of $p$, and regard its values as roots of unity lying in $\overline{\Q}_p$.  Then $\mu_{k,c}$
 has the property that
\begin{equation}\label{Mazurintegral}
    \int_{\Z_p^*}\chi\,d\mu_{k,c} \ \ = \ \ (1\,-\,\chi(c)^{-1})\,(1\,-\,\chi(p)p^{k-1})\,L(1-k,\chi)\,,
\end{equation}
where on the righthand side $\chi$ is thought of as a complex-valued character and the $L$-value is a complex algebraic number (which, when regarded as an element of $\overline{\Q}_p$, is equal to the lefthand side).\footnote{The middle factor $1-\chi(p)p^{k-1}$ in (\ref{Mazurintegral}) is of course 1 unless $\chi$ is the trivial character, owing to the vanishing of $\chi(p)$.  The righthand side is well-defined independent of what convention is used for the value of the trivial character $\imod {p^m}$ at $p$, i.e., whether  it is treated as a primitive character or not. }
That is, the $p$-adic Mellin transform of $\mu_{k,c}=kx^{k-1}\mu_{1,c}$ is the Kubota-Leopoldt $p$-adic $L$-function, a perspective which explains many     number-theoretic properties of both objects.

The measure $\mu_{k,c}$ is a $p$-adic analog of Jacobi's $\theta$-function, whose Mellin transform was used by Riemann to uncover many analytic properties of his eponymous $\zeta$-function, in particular its meromorphic continuation and functional equation.  The paper \cite{GMPS} considered the $p$-adic analog of a completely different derivation of the analytic properties of $\zeta(s)$:~the Langlands-Shahidi method, which involves an analysis of  Fourier expansions of Eisenstein series.  The main result there is an explicit distribution on $\Z_p^*$ given by the formula\footnote{This follows from formulas (3.17) and (3.19) in \cite{GMPS}; note the misprint in formula (3.20) there, which does not include the first minus sign.}
\begin{equation}\label{GMPSmeasure}
    \mu^*_k(b+p^m\Z_p) \ \ = \ \ \f{(-2\pi i)^k}{4\G(k)}\sum_{\srel{n\,\neq\,0}{p\nmid n}}\f{\mu(|n|)}{n^k}
    \(
    \sum_{j\,=\,0}^m p^{j(k-1)-mk}e^{2\pi i \bar{n} p^j b/p^m}-\f{p^{-m}}{1-p^{1-k}}
    \)\,,
\end{equation}
where  $\bar n$ is the modular inverse of $n\imod {p^m}$.
This formula is equal to a constant multiple of Haar measure on $\Z_p^*$ plus a distribution constructed directly from Fourier coefficients of weight-$k$ Eisenstein series.  Its $p$-adic Mellin transform is
\begin{equation}\label{GMPSMellin}
    \int_{\Z_p^*}\chi\i\,d\mu_k^* \ \ = \ \ \f{1}{(1-\chi(p)p^{k-1})\,L(1-k,\chi)}
\end{equation}
for any Dirichlet character $\chi$ whose conductor is a power of $p$, and any positive integer $k$ having the same parity as $\chi$ (this last assumption is necessary to ensure the $L$-value in the denominator is not zero).\footnote{For convergence reasons related to Eisenstein series, $k$ must be at least 3 in \cite{GMPS}; however, all results here are valid in the wider range $k\ge 1$.}

Since the Dirichlet $L$-function  appears in the numerator of  (\ref{Mazurintegral}) but in the denominator of (\ref{GMPSMellin}), the Mellin transforms of $\mu_{k,c}$ and $\mu_k^*$ are nearly reciprocal. In fact, up to some slight modifications involving $c$ they represent inverses of each other in the Iwasawa algebra.  It thus makes sense to study the divisibility properties of the values of  $\mu_k^*$; as the latter are given by Bernoulli numbers, the values of $\mu_k^*$ can be naturally thought of as ``inverse Bernoulli numbers''.

Using the connection with the Iwasawa algebra, the distribution  $\mu^*_k$ was in fact shown to be a measure for {\em regular}\footnote{A prime $p$ is regular if it divides the class number of $\Q(e^{2\pi i/p})$; the first few irregular primes are 37, 59, 67, 101,\ldots.} primes $p>2$ in \cite{GMPS}.  It is also not hard to see that $\mu^*_k$ fails to be a measure for irregular primes $p$.  Our main result quantifies this failure in a sharp way.

\begin{theorem}
\label{mainthm}
Let $p$ be an irregular prime and $k\ge 1$ a fixed integer.  Then there exists constants $c_1, c_2 >0$ such that
\begin{equation}\label{maininequality}
    c_1 \,p^m \ \ \le \ \ \max_{b\,\in\,(\Z/p^m\Z)^*}|\mu^*_k(b+p^m\Z_p)|_p \ \ \le \ \ c_2\,p^m
\end{equation}
for all integers $m\ge 1$.  Put differently, $\max_{b\,\in\,(\Z/p^m\Z)^*}|\mu^*_k(b+p^m\Z_p)|_p=p^{m+O(1)}$ for fixed $p$ and $k$.
\end{theorem}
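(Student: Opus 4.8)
The plan is to relate the $p$-adic size of the values $\mu_k^*(b+p^m\Z_p)$ to the denominators of Bernoulli-type numbers via the Mellin transform formula (\ref{GMPSMellin}), and then exploit the Kummer congruences together with the defining property of irregularity. Write $\mu_k^* = \sum_{\chi}\mu_k^*(\chi)$ via the decomposition over Dirichlet characters of $p$-power conductor; equivalently, expand $\mu_k^*$ against the group algebra $\Z_p[(\Z/p^m\Z)^*]$, whose dual is the set of such $\chi$ (together with the fact that $k$ must share the parity of $\chi$). The key point is that the values $|\mu_k^*(b+p^m\Z_p)|_p$ and the quantities $|\int_{\Z_p^*}\chi^{-1}d\mu_k^*|_p = |(1-\chi(p)p^{k-1})L(1-k,\chi)|_p^{-1}$ are comparable up to bounded powers of $p$: a finite Fourier transform over $(\Z/p^m\Z)^*$ introduces only a factor of $\varphi(p^m)^{\pm 1}$, whose $p$-adic absolute value is $p^{\mp(m-1)}$. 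So it suffices to show that $\max_\chi |L(1-k,\chi)|_p^{-1}$ grows like $p^{m+O(1)}$, the maximum taken over characters of conductor dividing $p^m$ and parity matching $k$.

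The upper bound $c_2 p^m$ should come from a clean estimate on denominators: $(1-\chi(p)p^{k-1})L(1-k,\chi)$ is, up to the Euler factor at $p$ (which contributes at most a bounded power of $p$ for fixed $k$), a generalized Bernoulli number $-B_{k,\chi}/k$, and the standard integrality results (the $\mu_{k,c}$ measure property (\ref{Mazurintegral}), or von Staudt--Clausen type bounds for $B_{k,\chi}$) show that $p^{m+O(1)}\, L(1-k,\chi)$ is a $p$-adic integer. More precisely, multiplying by a suitable $(1-c^{-k}\chi(c)^{-1})^{-1}$ and using that $\mu_{k,c}$ is a genuine measure, one gets $|L(1-k,\chi)|_p \ge c\cdot p^{-m}$ uniformly, giving the upper bound in (\ref{maininequality}) directly. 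The lower bound $c_1 p^m$ is where irregularity enters: one must produce, for each $m$, some character $\chi$ of conductor $p^m$ (or a nearby power) with $|L(1-k,\chi)|_p$ as small as $p^{-m+O(1)}$. Here I would use that $p$ is irregular, so there is an index $k_0$ with $1\le k_0 \le p-3$ and $p \mid B_{k_0}$; by the Kummer congruences $L_p(s,\omega^{k_0})$ has a zero, equivalently the branch of the $p$-adic $L$-function in the $\omega^{k_0}$-component has a nontrivial zero in $\Z_p$, hence by Weierstrass preparation the corresponding power series has a factor that forces $|L_p(1-k,\chi)|_p$ — for $\chi$ ranging over characters of the appropriate component and growing conductor, evaluated at points approaching the zero — to be divisible by higher and higher powers of $p$, at least $p^{-m+O(1)}$ after accounting for the cyclotomic variable $1+p\sim$ topological generator and its $p^{m-1}$-th power landing $p$-adically close to $1$. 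Combining with the Mellin inversion (the finite Fourier transform back to the $b$-values) yields $\max_b |\mu_k^*(b+p^m\Z_p)|_p \ge c_1 p^m$.

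The main obstacle I anticipate is the lower bound, specifically making the Iwasawa-theoretic input quantitative and uniform in $m$: one needs not just that some $L_p$-branch vanishes, but that its vanishing produces growth of exactly order $p^{m}$ (matching the upper bound) in the finite-level values, with no loss or gain of more than $O(1)$ powers of $p$. This requires controlling the Weierstrass factorization of the relevant element of the Iwasawa algebra $\Z_p[[T]]$ — its $\lambda$-invariant and $\mu$-invariant — and tracking how specialization at the $p^{m-1}$-torsion of the cyclotomic tower converts a zero of fixed order into a $p$-adic valuation growing linearly in $m$. The bookkeeping of Euler factors $1-\chi(p)p^{k-1}$ (which vanish when $\chi$ is nontrivial, are units otherwise) and the parity constraint on $k$ versus $\chi$ must be handled carefully but should be routine. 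A secondary, more technical point is that the finite Fourier transform relating the $\chi$-averages to the individual $b$-values is only an isomorphism over $\Q_p$, not $\Z_p$; the distortion is by $\varphi(p^m)$, contributing the harmless $p^{-(m-1)}$, but one must check that the maximum over $b$ genuinely sees the extreme character and is not washed out by cancellation — this follows because a single dominant $\chi$-component forces a large value at some $b$ by orthogonality.
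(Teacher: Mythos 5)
Your reduction in the first paragraph is where the argument breaks. You assert that $|\mu^*_k(b+p^m\Z_p)|_p$ and $|(1-\chi(p)p^{k-1})L(1-k,\chi)|_p^{-1}$ are ``comparable up to bounded powers of $p$'' and that it therefore suffices to show $\max_\chi |L(1-k,\chi)|_p^{-1}=p^{m+O(1)}$. This is false: for fixed $p$ and $k$, $|L(1-k,\chi)|_p$ is bounded above \emph{and below} uniformly over all $\chi$ of $p$-power conductor. That is exactly Proposition~\ref{RGProp1} applied to $\Psi_{Iw}$, proved via Iwasawa's argument (Lemma~\ref{RGLemma1}): a character of conductor $p^t$ evaluates the Iwasawa power series at $\zeta-1$ with $|\zeta-1|=p^{-1/((p-1)p^{t-1})}\to 1$, so these evaluation points stay away from any zero $\beta$ (which has $|\beta|<1$), and the values tend to absolute value $1$. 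Indeed, if $\max_\chi|L(1-k,\chi)|_p^{-1}$ really grew like $p^m$, then (\ref{padicmellinversion}) together with $|\phi(p^m)|_p^{-1}=p^{m-1}$ would permit values of size $p^{2m-1}$, contradicting the very upper bound you are proving; the factor $p^m$ in the theorem comes from $1/\phi(p^m)$, not from growth of $1/|L|_p$. Your upper-bound step inherits both problems: the estimate you propose, $|L(1-k,\chi)|_p\ge c\,p^{-m}$, only yields $|\mu_k^*(b+p^m\Z_p)|_p\le C\,p^{2m-1}$, and in any case the integrality/measure properties of $\mu_{k,c}$ (or von Staudt--Clausen type facts) give \emph{upper} bounds on $|L(1-k,\chi)|_p$, not the lower bound $|L(1-k,\chi)|_p\ge c$ (uniform in $m$) that the upper bound of (\ref{maininequality}) actually requires and which Section~3 of the paper supplies.

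The lower-bound mechanism also fails as described, and an essential hypothesis is missing. Finite-order characters of growing conductor never ``approach the zero'': as above, their evaluation points $\zeta-1$ tend to the boundary of the unit disc, so no finite-order $\chi$ satisfies $|L_p(1-k,\chi)|_p\le p^{-m+O(1)}$ (again this would contradict the upper bound). Moreover, $p\mid B_{k_0}$ by itself only gives $\lambda+\mu>0$ for the relevant branch; the power series could a priori be $p$ times a unit, with no zero at all. To produce an honest zero $\beta$ with $|\beta|<1$ you need the vanishing of the $\mu$-invariant, i.e.\ the Ferrero--Washington theorem, which your sketch omits. The paper exploits the zero through a (generally infinite-order) character $\chi=\omega^i\varphi$ with $\varphi(\gamma)=1+\beta$: setting $p^{t_m}=\max_b|\mu_k^*(b+p^m\Z_p)|_p$, the integrality $p^{t_m}\psi_{Iw,m}^{-1}\in\Z_p[G_m]$ lifts to a congruence $\Theta_2 T_m\equiv p^{t_m}\Theta_1 \pmod{\gamma^{p^{m-1}}-1}$ in $\Z_p[[G_\infty]]$; applying $\chi$ kills $\Theta_2$ and gives $p^{t_m}\chi(\Theta_1)\equiv 0 \pmod{(1+\beta)^{p^{m-1}}-1}$, and the estimate $|(1+\beta)^{p^{m-1}}-1|\le C\,p^{-m}$ forces $t_m\ge m-O(1)$. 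This congruence argument also avoids the cancellation issue you dismiss by positing ``a single dominant $\chi$-component'': as the paper's remark explains, several terms of (\ref{padicmellinversion}) can share the dominant valuation (two indices of irregularity, or $\lambda\ge 2$), so orthogonality alone does not salvage a Fourier-side lower bound.
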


\medskip

{\bf Remark:}  The inequality in Theorem~\ref{mainthm} is nearly always a sharp equality, and in fact we expect
\begin{equation}\label{expect}
  |\mu^*_k(b+p^m\Z_p)|_p \ \ = \ \ p^m
\end{equation}
for typical choices of $p$, $m$, $k$, and $b$ with extremely high probability.  This can be seen using the formula
\begin{equation}\label{padicmellinversion}
  \mu_k^*(b+p^m\Z_p) \ \ = \ \ \f{1}{\phi(p^m)}\, \sum_{\srel{\chi\imod{p^m}}{\chi(-1)=(-1)^k}}\f{\chi(b)}{(1-\chi(p)p^{k-1})\,L(1-k,\chi)}\,,
\end{equation}
which results from applying $p$-adic Mellin inversion  to (\ref{GMPSMellin}).  When one particular $L$-value has a smaller valuation than the others, its term  dominates the sum and thus $|\mu_k^*(b+p^m\Z_p)|_p=\f{p^{m-1}}{|L(1-k,\chi)|_p}$; in fact such a dominant term comes from a character $\chi$ of order dividing $p-1$.  Thus the closest distance $d_p(k)$ of $1-k$   to a $p$-adic zero of the Kubota-Leopoldt $L$-function influences this valuation.
  If indeed a single term in (\ref{padicmellinversion}) dominates, then one obtains the following precise equality
\begin{equation}\label{sharperexpect}
\big|  \mu_k^*(b+p^m\Z_p)\big|_p  \ \ = \ \    \frac{p^{m-1}}{d_p(k)} 
\end{equation}
for all $b, m $, and $k$.  One situation in which a single term dominates is when there is exactly one index of irregularity for $p$  and the corresponding $\lambda$-invariant is equal to 1.  This is a quite common occurrence as one sees from the extensive calculations in \cite{BH}.  Those calculations show that for $p < 163,000,000$  and  for each index of irregularity, the corresponding $\lambda$-invariant for that index is indeed equal to 1.  However, there may conceivably be examples to the contrary.   When there are two or more indices of irregularity,    or one index with corresponding $\lambda$-invariant $\ge 2$,     the situation becomes more subtle because two or more terms in (\ref{padicmellinversion}) may have the same dominant valuation, and then  $\big|  \mu_k^*(b+p^m\Z_p)\big|_p$ could vary in size for different choices of $b$ depending on whether or not cancellations occur.  However, our limited numerical calculations indicate that the maximum in (\ref{maininequality})  is typically well-behaved.

\medskip

Section \ref{sec:inverseidem} contains some pertinent background on the Iwasawa algebra.  The proofs of the upper and lower bounds in \thmref{mainthm} are given in Sections~\ref{sec:upperbd} and \ref{sec:lowerbd}, respectively.  The main tools needed for these bounds are $p$-adic estimates of Dirichlet $L$-functions, as well as the Ferrero-Washington Theorem \cite{FerrWash}.

\medskip

The authors would like to thank Larry Washington for his helpful discussions.

\section{The inverse measure in terms of idempotents and the Iwasawa algebra}\label{sec:inverseidem}

For our purposes it is convenient to begin with an explanation of the algebraic derivation of the reciprocal measure different than the one given in \cite[Section 4]{GMPS}.  

\subsection{Valuations on $\overline{\Q}_p$}\label{sec:sub:valuations}

Fix a prime $p$ and let $|\cdot|_p$ denote the $p$-adic valuation on $\Q_p$; since $p$ is fixed we shall often drop the subscript from the valuation.  We have that $\Z_p=\{x\in \Q_p||x|\le 1\}$ is the ring of integers of $\Q_p$, and $p\Z_p=\{x\in \Q_p||x|<1\}$ is the maximal ideal of $\Z_p$.  If $x$ is an element of a finite extension $F$ of $\Q_p$, its valuation   will be given by the formula
\begin{equation}\label{valuation}
    |x| \ \ = \ \ |N_{F/\Q_p}(x)|^{1/[F:\Q_p]}\,,
\end{equation}
where $N_{F/\Q_p}(x)=\prod_{\sigma}\sigma(x)$, $\sigma$ running over the $\Q_p$-linear embeddings of $F$ into $\overline{\Q}_p$ (this set coincides with $\operatorname{Gal}(F/\Q_p)$ when $F/\Q_p$ is a Galois extension).  This formula gives an extension of $|\cdot|$ from $\Q_p$ to its algebraic closure $\overline{\Q}_p$.  Of particular interest will be the cyclotomic extensions $\Q_p(\zeta)$, where $\zeta$ is a primitive $p^t$-th root of unity for some integer $t\ge 1$.  The ring of integers of $\Q_p(\zeta)$ is given by
$\{|x|\le 1\}$; its maximal ideal $\{|x|<1\}$   is generated by $\zeta-1$ \cite[p.104]{Robert}.

For any element $x\in \Z_p^*=\{x\in \Q_p||x|=1\}$, the sequence  $(x^{p^n})$ converges to a limit $\omega(x)\in \Z_p^*$; $\omega$ is called the {\em Teichm\"uller} character of $\Z_p^*$ and its image is precisely the set $\Delta$ of all $(p-1)$-st roots of unity in $\Z_p^*$.  Using the factorization $x=\omega(x)\cdot ( \omega(x)\i x)$ one sees the direct product decomposition $\Z_p^*=\Delta\times\Gamma$, where $\Gamma=1+p\Z_p$ is isomorphic to $\Z_p$.  Fix a topological generator  $\g$   of $\G$, so that this isomorphism from $\Z_p$ to $\Gamma$ can be written as $x\mapsto \g^x$.  Any   character $\chi$ of $\Z_p^*$ can be uniquely factored as $\chi=\omega^i \chi_{_\Gamma}$, where $0\le i<p-1$ and $\chi_{_\Gamma}$ is the restriction of $\chi$ to $\Gamma$.  If $\chi$ has finite order, then $\chi_{_\Gamma}$ takes values in $\Z_p[\zeta]$ for some $p^t$-th root of unity $\zeta$; since $\Delta$ is contained in $\Z_p$, all values of $\chi$ in fact lie in $\Z_p[\zeta]$.
The Kubota-Leopoldt $p$-adic $L$-function is defined by the formula
\begin{equation}\label{RGKubotaL}
 L_p(1-k,  \omega^t \phi)  \ \  = \ \  (1- \omega^{t-k}\phi(p)p^{k-1}) \, L(1-k, \omega^{t-k}\phi)
\end{equation}
  for all finite order characters $\phi$ of $\Gamma$, $k\in \Z_{\ge 1}$,  and all integers   $0 \le t \le p-2$.

\subsection{Idempotents for a finite group}\label{sec:sub:idempotents}

Let $G$ and $G'$ denote finite abelian groups and let $\rho\in \operatorname{Hom}(G,G')$.  Then $\rho$ extends to a homomorphism of group rings $\overline{\Q}_p[G]\rightarrow \overline{\Q}_p[G']$ via the formula
\begin{equation}\label{extendrho}
    \rho\(\sum_{g\in G}a_g\, g\) \ \ = \ \ \sum_{g\in G}a_g \,\rho(g)\,.
\end{equation}
In the special case that $G'=\overline{\Q}_p^*$ and $\chi\in \widehat{G}=\operatorname{Hom}(G,\overline{\Q}_p^*)$, this specializes to the   homomorphism
\begin{equation}\label{extendchi}
    \chi\(\sum_{g\in G}a_g \,g\) \ \ = \ \ \sum_{g\in G}a_g\, \chi(g)
\end{equation}
from $\overline{\Q}_p[G]$ to $\overline{\Q}_p$.

It is an immediate consequence of the orthogonality of characters that an element $\theta\in \overline{\Q}_p[G]$ is determined by the values of $\chi(\theta)$, where $\chi$ varies over all elements of $\widehat{G}$.  This can be made explicit using the {\em idempotent}
\begin{equation}\label{echidef}
    e_\chi \ \ := \ \ \smallf{1}{\# G}\sum_{g\in G}\chi^{-1}(g)g \ \ \in \ \ \overline{\Q}_p[G]
\end{equation}
 associated to $\chi\in \widehat{G}$; $\sum_{\chi\in \widehat{G}}e_\chi$ is the identity element $1\cdot id_G$ in the group ring $\overline{\Q}_p[G]$, where $id_G$ is the identity element of $G$.  Then $\overline{\Q}_p[G]$ is equal to the direct sum of its ideals
\begin{equation}\label{directsum}
    \overline{\Q}_p[G] \ \ = \ \ \bigoplus_{\chi\in \widehat{G}}e_\chi \overline{\Q}_p[G]\,,
\end{equation}
and an element $\theta\in  \overline{\Q}_p[G]$ is characterized by the vector $(\chi(\theta))_{\chi\in\widehat{G}}$.
 In more detail,
   the projection of $\theta$ to $e_\chi \overline{\Q}_p[G]$ is  equal to
$
e_{\chi} \theta  =   \chi(\theta)e_{\chi}
$
and $\theta$ is the sum of those projections:
\begin{equation}\label{explicitdirectsum}
\theta ~=~   \sum_{\chi\in\widehat{G}} ~  e_{\chi}\theta  ~=~   \sum_{\chi\in\widehat{G}} ~~\chi(\theta)\bigg(  \frac{1}{\# G}   \sum_{g\in G} ~ \chi^{-1}(g) g \bigg)~=~   \sum_{g \in G}  a_g\, g\,,
\end{equation}
where
$
a_g  =     \frac{1}{\# G}   \sum_{\chi\in\widehat{G}}  \chi^{-1}(g)\chi(\theta)
$.

Let us now consider the embedding
\begin{equation}\label{QpinQpbar}
    \Q_p[G] \ \ \hookrightarrow \ \ \overline{\Q}_p[G]\,,
\end{equation}
and consider $\operatorname{Gal}(\Q_p(\chi)/\Q_p)$, where $\Q_p(\chi)$ is the finite extension of $\Q_p$ obtained by adjoining all values of the character $\chi$.  We use the notation $\d(\chi)=\d\circ \chi\in \widehat{G}$ to denote the Galois conjugate of $\chi$ by an element $\d\in \operatorname{Gal}(\Q_p(\chi)/\Q_p)$.  The image of $\Q_p[G]$ under (\ref{QpinQpbar}) is simple to characterize:
\begin{equation}\label{Galoiscrit}
    \theta\,\in\,\Q_p[G] \ \Longleftrightarrow \ \phi(e_\chi  \theta)\,=\,e_{\phi\circ\chi}  \theta,
    \forall \chi\in\widehat{G} \text{~and~}\phi\in\operatorname{Gal}(\overline{\Q}_p/\Q_p),
\end{equation}
that is, the $e_\chi \theta$ form a Galois-invariant set.
Any element $\theta=\sum_{g\in G}a_g g \in \Q_p[G]$ defines a $\Q_p$-valued measure on $G$ by the formula $\mu_\theta(\{g\})=a_g$, so that the notation
\begin{equation}\label{integrationashomomorphism}
    \chi( \theta) \ \ = \ \ \int_G\chi \,d\mu_\theta
\end{equation}
can equally well be used for (\ref{extendchi}).

Next consider a finite abelian group $G'$ with a surjective  homomorphism $\rho:G'\rightarrow G$, which induces an injective homomorphism from $\widehat{G}$ to $\widehat{G'}$ given by $\chi\mapsto \chi\circ \rho$. For any element $\theta'=\sum_{g'\in G'}a_{g'}g'\in \Q_p[G']$, its image   $\rho(\theta')\in \Q_p[G]$ defined by (\ref{extendrho}) is completely determined by the values $\chi(\theta')$, where $\chi$ ranges over  the image of this induced injective homomorphism $\widehat{G}\hookrightarrow \widehat{G'}$.

The family of groups $G_n=\Z_p^*/(1+p^n\Z_p)\cong (\Z/p^n\Z)^*$, $n\ge 1$, have surjective homomorphisms $\rho_{n}:G_{n+1}\twoheadrightarrow G_n$  and inverse limit  $G_\infty \cong \Z_p^*$.  Likewise, a $\Q_p$-valued distribution on $\Z_p^*$ is an element of the inverse limit of the group rings $\Q_p[G_n]$ with respect to the projections $\Q_p[G_{n+1}]\twoheadrightarrow \Q_p[G_n]$.  An element of this inverse limit is a compatible sequence $\Theta=(\theta_n)$ of $\Q_p$-valued measures on the respective finite abelian groups $G_n\cong (\Z/p^n\Z)^*$.  These compatible measures themselves form a finite-additive measure on $\Z_p^*$, and are determined by the values of $e_\chi  \theta_n$, for all $\chi\in \widehat{G_n}$ and all $n\ge 1$.  Conversely, a Galois-compatible sequence of values (possibly in $\overline{\Q}_p$) for $e_\chi  \theta_n$ in the sense of (\ref{Galoiscrit}) determine a $\Q_p$-valued distribution on $G_\infty \cong \Z_p^*$.
We conclude this subsection with three important examples.

\noindent {\bf Example 1: the normalized Haar distribution $\nu_{\text{Haar}}$.}

Like all locally compact groups, $\Z_p^*$ has a translation-invariant Haar ``measure'' $\nu_{\text{Haar}}$;  up to scaling, it is given by $\nu_{\text{Haar}}(b+p^m\Z_p)=p^{-m}$ for $m\ge 1$.  However, the quotation marks are necessary because Haar measures are {\em a priori} real-valued, not $\Q_p$-valued:~$\nu_{\text{Haar}}(b+p^m\Z_p)$ is unbounded, inconsistent with the use of the term ``$p$-adic measure''.  The terminology notwithstanding,  $\nu_{\text{Haar}}$  is merely a $p$-adic distribution on $\Z_p^*$.
It corresponds to a sequence $(\theta_n)$ with
\begin{equation}\label{example1A}
e_\chi \theta_n \ \ = \ \ \left\{
  \begin{array}{ll}
    0, & \chi\ \text{nontrivial}\,; \\
    \f{p-1}{p}\,e_\chi, & \chi\ \text{trivial}\,.
  \end{array}
\right.
\end{equation}

\noindent {\bf Example 2:~Mazur's measure $\mu_{k,c}$ in (\ref{Mazurmeasuredef}).}

According to  (\ref{Mazurintegral}), $\mu_{k,c}$ corresponds to a sequence
$(\theta_n)$ with
\begin{equation}\label{example2A}
e_\chi  \theta_n \ \ = \ \ (1\,-\,\chi(c)^{-1})\,(1\,-\,\chi(p)p^{k-1})\,L(1-k,\chi)\,e_\chi.
\end{equation}
Unlike the previous example, here $(\theta_n)$ corresponds to a $p$-adic (i.e., bounded) measure.  Because of the rationality in definition (\ref{Mazurmeasuredef}) inherited from the Bernoulli polynomials $B_k(x)$, the Galois-compatibility property (\ref{Galoiscrit}) holds.

\noindent {\bf Example 3:~the distribution $\mu_k^*$ from \cite{GMPS}  in  (\ref{GMPSmeasure}).}

By (\ref{GMPSMellin}), this corresponds to a sequence
$(\theta_n)$ with
\begin{equation}\label{example3A}
    e_\chi  \theta_n \ \ = \ \ \bigg( (1-\chi(p)^{-1}p^{k-1})L(1-k, \chi^{-1})  \bigg)^{-1}\!\!e_\chi\,.
\end{equation}
The Galois-compatibility property (\ref{Galoiscrit}) again holds for $\theta_n$.  This can be seen by the reciprocal relationship between examples 2 and 3 in the ring of fractions in the   Iwasawa algebra for $\Z_p^*$, or more directly through the system of rational linear equations implicit in  (\ref{Mazurintegral}) and  (\ref{GMPSMellin}) that define $\mu_k^*$ as nearly inverse to the rational-valued $\mu_{k,c}$.

\subsection{The Iwasawa algebra}
\label{sec:sub:Iwasawaalg}

Recall that $G_{\infty} \cong \Z_p^*\cong \Delta \times \Gamma$,   where $\Gamma= 1+p\Z_p$   and   $\Delta \cong G_1\cong (\Z/p\Z)^*$   is a cyclic group of order $p-1$.  The subgroup $\G$ is isomorphic to $\Z_p$ and in particular is the inverse limit $\displaystyle  \G=\lim_{\longleftarrow}\G_n$, where $\G_n=\G/\G^{p^n}$ and $\G^{p^n}=1+p^{n+1}\Z_p$.  As before, $\g$ denotes a fixed topological generator of $\G$, corresponding to $1\in \Z_p$ under this isomorphism.

One can think of $G_\infty$ as the Galois group $\operatorname{Gal}(K_\infty/\Q)$, where $K_\infty=\Q(\mu_{p^\infty})$ and $\mu_{p^\infty}$  denotes the group of $p$-power roots of unity. The isomorphism to $\Z_p^*$ is defined by sending $g\in \operatorname{Gal}(K_\infty/\Q)$ to the unique $\alpha \in \Z_p^*$ such that $g(\zeta)=\zeta^\alpha$ for all $\zeta\in \mu_{p^\infty}$.  With
this isomorphism, $\Gamma$ is identified with $\operatorname{Gal}(K_\infty/K)$  where $K = \Q(\mu_p)$ and $\mu_p$ is the group of $p$-th roots of unity.  Then $\g(\zeta)=\zeta^\kappa$ for a certain $\kappa\in 1+p\Z_p$ and all $\zeta\in \mu_{p^\infty}$.

 The {\em Iwasawa algebra} of $G_{\infty}$
 is by definition
\begin{equation}\label{IwasawaalgdefR}
R  \ \ = \ \   \Z_p[[G_{\infty}]]  \ \ = \ \   \lim_{\longleftarrow} \Z_p[G_n]\,,
\end{equation}
where the inverse limit is again  defined using the maps $\rho_{n}$.
Similarly, the  Iwasawa algebra of $\G$ is defined as the inverse limit
\begin{equation}\label{IwasawaalgdefLambda}
\L \ \ = \ \   \Z_p[[\G]]  \ \ = \ \   \lim_{\longleftarrow} \Z_p[\G_n]\,;
\end{equation}
it is isomorphic to the ring of formal Laurent series $\Z_p[[T]]$ via the correspondence  $\g\mapsto 1+T$ \cite[\S7]{washington}.
 The inverse isomorphism sends an element $F(T) \in \Z_p[[T]]$   to $\Theta=F(\gamma-id_{\Gamma})\in \Lambda$.  One can make sense of this by showing that the infinite series $F(\gamma-id_{\Gamma})$ converges in  $\Lambda$, which  is a local ring endowed with the  ${\mathfrak{m}}$-adic topology, where the maximal ideal ${\mathfrak{m}} = (p, \gamma-id_{\Gamma})$.

Thus $R= \Lambda[\Delta]$, the group ring for $\Delta$ over the ring $\Lambda$. Using the
$p-1$ idempotents $e_{\omega^i}$ for $\Delta$, one can write the compact topological ring $R$ as the direct sum of the $p-1$ ideals $e_{\omega^i}R$, each of which is a ring canonically  isomorphic to $\Lambda$.    Moreover, the ring of fractions ${\mathcal F}$ of $R$ is isomorphic to the direct sum of   the fraction fields of these rings: ${\mathcal F} \cong {\mathcal L}^{p-1}$,  where ${\mathcal L}$ is the fraction field of $\Lambda\cong \Z_p[[T]]$.  Thus an element of $\cal F$ is determined by its $p-1$ idempotent projections, which can be identified with ratios $F_{(i,1)}(T)/F_{(i,2)}(T)$, where $F_{(i,1)}(T)$ and   $F_{(i,2)}(T)$ are relatively prime elements of $\Z_p[[T]]$.

In Section~\ref{sec:sub:idempotents} we considered $\Q_p$-valued distributions on $\Z_p^*$ as elements $\Theta = (\theta_n)$ of $\displaystyle \lim_{\longleftarrow}\Q_p[G_n]$.  Elements   of $R$ are of course thus examples of $p$-adic measures on $\Z_p^*$, and elements of $\cal F$ are simply quotients $\Psi=\Theta_1/\Theta_2$ of $\Theta_1=(\theta_n^{(1)})$, $\Theta_2=(\theta_n^{(2)})\in R$ for which $e_{\omega^i}\Theta_2$   is nonzero for each $i=0,\ldots,p-2$.  This last condition is met, for example, if $\theta_n^{(2)}$ is invertible in the group algebra $\Q_p[G_n]$ for all $n$, or satisfies the equivalent requirement that $\chi(\theta_n^{(2)})\neq 0$ for all characters $\chi$ of $G_n$ (see (\ref{extendchi})).  Thus the quotient $\Psi$ can be thought of as the sequence $(\psi_n)$, where $\psi_n=\theta_n^{(1)}/\theta_n^{(2)}$; for characters $\chi$ of $G_n$ we may then define
$
\chi(\psi_n) = \chi(\theta_n^{(1)})\chi(\theta_n^{(2)})^{-1}$.

Any finite-order, continuous character $\chi$ of $G_\infty$ factors through $G_n$ for some $n$ and takes values in $\Z_p[\zeta]$, for some $p^t$-th root of unity $\zeta$.  Using formula (\ref{extendchi}), $\chi$ extends first to a ring homomorphism $\Z_p[G_m]\rightarrow \Z_p[\zeta]$, and then to a ring homomorphism $R\to \Z_p[\zeta]$ using the map $R\to \Z_p[G_m]$ implicit  in the profinite limit (\ref{IwasawaalgdefR}).
As we saw in section~\ref{sec:sub:idempotents}, $\Theta$ corresponds to a $\Z_p$-valued distribution $\nu$ on $G_{\infty}=\Z_p^{*}$   such that
\begin{equation}\label{integrationwithfiniteorderchar}
\chi(\Theta)   \ \ = \ \   \int_{\Z_p^{*}}~\chi \,d\nu\,.
\end{equation}
Recall that $R=\oplus_{0\le j < p-1}e_{\omega^j}R$.  Since $|\Delta|=p-1$  is relatively prime to $p$ and $G_\infty\cong \Z_p^*\cong \Delta\times\Gamma$ contains the $(p-1)$-st roots of unity, each idempotent $e_{\omega^j}\in \Z_p[\Delta]\subset \Z_p[[G_\infty]]=R$.  In particular,
\begin{equation}\label{chionidempotent}
    \chi(e_{\omega^j}) \ \ = \ \ \left\{
                                   \begin{array}{ll}
                                     1, & i=j\,; \\
                                     0, & i\neq j\,,
                                   \end{array}
                                 \right.
\end{equation}
where we  write $\chi=\omega^i\chi_{_\Gamma}$, with   $0\le i < p-1$ and $\chi_{_\Gamma}=\chi|_\Gamma$.  The restriction $\chi_{_\Gamma}$ also determines a ring homomorphism $\Lambda\to \Z_p[\zeta]$ using (\ref{extendchi}).
Then  $\chi(\Theta) = \chi(e_{\omega^i}\Theta)$, so
  $\chi(\Theta)$  depends only on the projection of $\Theta$  to the direct summand $e_{\omega^i}R$, a ring which is isomorphic to $\Lambda$ and on which the group homomorphisms induced by $\chi$ and $\chi_{_\Gamma}$ coincide.  Moreover, for  $\Psi=\Theta_1/\Theta_2$ in ${\mathcal F}$ with  $\chi(\Theta_2) \neq 0$,  $\chi(\Psi)=\chi(e_{\omega^i}\Psi)=\chi_{_\Gamma}(e_{\omega^i}\Psi)$.

\subsection{The Iwasawa element and its inverse}
\label{sec:sub:psiIw}

Recall that $k$ is a fixed integer at least 2.
There exists an element $\Psi_{Iw} = \{\psi_{Iw,m}\}$ in ${\mathcal F}$ such that
\begin{equation}\label{Bchidef1}
\chi(\psi_{Iw,m}) \ \ = \ \ b_\chi \ \ := \ \
\left\{
  \begin{array}{ll}
   (1-\chi(p)p^{k-1})\,L(1-k, \chi), & \qquad \chi(-1)=(-1)^k\,; \\
    1, & \qquad \chi(-1)\neq(-1)^k
  \end{array}
\right.
\end{equation}
for all $\chi$ factoring through $G_m$.
Its inverse, $\Psi_{Iw}\i\in \cal F$, satisfies the corresponding property
\begin{equation}\label{Bchidef2}
\chi\(\Psi_{Iw}\i \) \ \ = \ \  b_\chi\i
\end{equation}
for all finite order characters $\chi$,
and was the main object of study in \cite{GMPS} (it corresponds to the $p$-adic distribution $\mu_k^*$; see (\ref{example3A})).

One can define $\Psi_{Iw}$ by defining its projection to each component in the direct sum decomposition of ${\mathcal F}$ as $p-1$ copies of $\cal L$. It is a nontrivial element of $\Lambda$ in  $\f{p-1}{2} - 1$ components, has a nontrivial denominator in one component, and is just the constant 1 in the remaining $(p-1)/2$ components. The component where $\Psi_{Iw}$ has a nontrivial denominator corresponds to the existence of a simple pole at $s=1$ for $L_p(s, \omega^0)$.    Setting $t=0$ in (\ref{RGKubotaL}), we have
 \begin{equation}\label{RGconnectionwithLp}
    L_p(1-k,  \omega^0 \phi)  \ \  = \ \  (1- \omega^{-k}\phi(p)p^{k-1}) \,L(1-k, \omega^{-k}\phi)
\end{equation}
  for all finite order characters $\phi$ of $\Gamma$.    Thus,  the value of $i$ such that the projection to the $\omega^i$-component has a nontrivial denominator is determined by the congruence $i \equiv -k \pmod{p-1}$. One can take that denominator to be $\gamma - \kappa^k$, where $\gamma \in \Gamma$ and $\kappa \in 1+p\Z_p$ are as defined at the beginning of Section~\ref{sec:sub:Iwasawaalg}.   This denominator corresponds to the element $T- (\kappa^k-1)$ in $\Z_p[[T]]$.

\section{Proof of the upper bound}\label{sec:upperbd}

This section contains some standard results in basic Iwasawa theory, and is included for the convenience of readers unfamiliar with the topic.   In particular, the estimate of Proposition~\ref{RGProp1} (when applied to $\Psi_{Iw}$) shows that $|L(1-k,\chi)|$ is bounded above and below when $\chi$ ranges over Dirichlet conductors of $p$-power conductor (for $k$ and $p$ fixed), a result implicit in Iwasawa's papers.

Fix an integer $k\ge 1$ and a  prime $p$ (the assumption that $p$ is irregular is not necessary in this section, but since $\mu_k^*$ is actually bounded for regular primes $p$ the estimate one gets is far from the truth).  In this section we will show the existence of a constant $c_2$ such that
\begin{equation}\label{upperbd}
|  \mu^*_k(b + p^m \Z_p) \big|   \ \  \le   \ \   c_2\,p^m
\end{equation}
for all $b\in (\Z/p^m\Z)^*$ and $m\ge 1$.   Our argument is based on the following proposition.

\bigskip

\begin{proposition}\label{RGProp1}   Assume that $\Theta_1, \Theta_2\in R=\Z_p[[G_\infty]]$ have $\chi(\Theta_1)\neq 0$ and $\chi(\Theta_2)\neq 0$ for all finite order characters $\chi$ of $G_\infty\cong \Z_p^*$.   Let $\Psi=\Theta_1/\Theta_2$.   Then $|\chi(\Psi)|$ is bounded above and below independently of the finite order character $\chi$.
\end{proposition}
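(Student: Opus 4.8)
The plan is to reduce to a statement about the fraction field $\mathcal{L}$ of $\Lambda \cong \Z_p[[T]]$ componentwise. By the decomposition $R = \bigoplus_{i=0}^{p-2} e_{\omega^i} R$ with each $e_{\omega^i}R \cong \Lambda$, and the observation from Section~\ref{sec:sub:Iwasawaalg} that for $\chi = \omega^i \chi_{_\Gamma}$ one has $\chi(\Psi) = \chi_{_\Gamma}(e_{\omega^i}\Psi)$, it suffices to treat each of the $p-1$ components separately: write $e_{\omega^i}\Theta_j$ as a power series $F_{j}^{(i)}(T) \in \Z_p[[T]]$ under $\gamma \mapsto 1+T$, and note that $\chi_{_\Gamma}$ corresponds to evaluation $T \mapsto \zeta\kappa^? - 1$ for an appropriate root of unity (more precisely $\chi_{_\Gamma}(\gamma) - 1$), which is an element of the maximal ideal $\mathfrak{m}_{\zeta} = (\zeta-1)$ of the ring of integers of $\Q_p(\zeta)$. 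Since only finitely many components are involved and the bounds combine by taking the min and max, the whole proposition follows from the single-variable case.

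For the single-variable case, I would invoke the Weierstrass preparation theorem: any nonzero $F(T) \in \Z_p[[T]]$ factors as $F(T) = p^{\mu} U(T) P(T)$ where $U(T)$ is a unit in $\Z_p[[T]]$, $P(T)$ is a distinguished (monic, with non-leading coefficients in $p\Z_p$) polynomial of degree $\lambda$, and $\mu, \lambda \ge 0$. Writing $F_j^{(i)}(T) = p^{\mu_j} U_j(T) P_j(T)$, the ratio is $\chi_{_\Gamma}(e_{\omega^i}\Psi) = p^{\mu_1 - \mu_2} \frac{U_1(x)P_1(x)}{U_2(x)P_2(x)}$ where $x = \chi_{_\Gamma}(\gamma) - 1$ ranges over elements with $|x| < 1$ (indeed $|x| \le p^{-1/(p-1)\cdot p^{t-1}}$-type bounds, but all we need is $|x|<1$, hence $x$ lies in the maximal ideal). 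A unit $U(T) \in \Z_p[[T]]^*$ has $|U(x)| = 1$ for any $x$ with $|x| < 1$, since $U(x) \equiv U(0) \not\equiv 0 \imod{\mathfrak{m}}$. So the unit factors contribute nothing, and it remains to bound $|P_1(x)|/|P_2(x)|$ above and below for a distinguished polynomial $P$ evaluated at $x$ in the maximal ideal.

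The crux is therefore: for a fixed distinguished polynomial $P(T) = \prod_{\ell}(T - r_\ell)$ (roots $r_\ell \in \overline{\Q}_p$ with $|r_\ell| < 1$, since the Newton polygon of a distinguished polynomial has all slopes negative), show $|P(x)|$ is bounded above and below, uniformly over all $x$ arising as $\chi_{_\Gamma}(\gamma)-1$. The upper bound is immediate: $|P(x)| = \prod_\ell |x - r_\ell| \le 1$ since $|x|, |r_\ell| < 1$. The lower bound is the real content and is exactly where the hypothesis $\chi(\Theta_1)\neq 0, \chi(\Theta_2) \neq 0$ for \emph{all} finite order $\chi$ enters: it guarantees $P(x) \neq 0$ for every such $x$, i.e. no $r_\ell$ equals any $\chi_{_\Gamma}(\gamma) - 1$. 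One then argues that the set $\{\chi_{_\Gamma}(\gamma) - 1 : \chi_{_\Gamma} \text{ finite order}\} = \{\zeta - 1 : \zeta \in \mu_{p^\infty}\}$ (using $\gamma$ a topological generator), and a root $r_\ell$ with $|r_\ell| < 1$ either is bounded away from all of $\{\zeta - 1\}$ in valuation, or else $|r_\ell - (\zeta-1)|$ takes only finitely many values as $\zeta$ ranges over a fixed level $\mu_{p^t}$ while for deeper levels $|\zeta - 1| \to 0$ forces $|r_\ell - (\zeta-1)| = |r_\ell|$ to be constant once $|\zeta-1| < |r_\ell|$. Hence $\inf_x |x - r_\ell| > 0$, and since there are finitely many $r_\ell$ the product is bounded below by a positive constant. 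I expect the main obstacle to be this uniform lower bound — specifically, verifying carefully that the valuations $|(\zeta - 1) - r_\ell|$ do not accumulate at $0$, which hinges on the non-vanishing hypothesis ruling out $r_\ell \in \{\zeta - 1\}$ combined with the discreteness of the relevant value groups at each finite level and the fact that $|\zeta-1|$ itself tends to $0$ only along the tail.
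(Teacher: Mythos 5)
Your overall strategy is the same as the paper's: project onto the $p-1$ idempotent components, identify $\chi(\Theta_j)$ with the value $F_{(i,j)}(\zeta-1)$ of a power series in $\Z_p[[T]]$ at $\zeta=\chi(\gamma)\in\mathcal Z$, and reduce to a two-sided bound on such values (the paper's Lemmas~\ref{RGLemma1} and~\ref{RGLemma2}). Your implementation of the single-variable estimate via Weierstrass preparation is a legitimate variant of the paper's argument, which instead divides $F$ by $p^{\mu}$ and shows that the term with the first unit coefficient $b_\lambda$ dominates for $\zeta$ of high order; the two are the same estimate in different clothing, since $\lambda$ is the degree of your distinguished polynomial and both arguments ultimately give $|F(\zeta-1)|=|\zeta-1|^{\lambda}$ for $\zeta$ of large order.

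However, the step you yourself identify as the crux is argued from a false premise: you assert that ``for deeper levels $|\zeta-1|\to 0$.'' In fact, for $\zeta$ a primitive $p^t$-th root of unity one has $|\zeta-1|=p^{-1/((p-1)p^{t-1})}$, which tends to $1$, not $0$, as $t\to\infty$ (this is (\ref{upbd6}), and it is precisely what drives the lemma). Your case analysis is therefore organized backwards: for a fixed root $r_\ell$ of the distinguished polynomial one has $0<|r_\ell|<1$ (note $r_\ell\neq 0$ because $\zeta=1$ belongs to $\mathcal Z$, i.e.\ the trivial character is allowed, and the nonvanishing hypothesis applies there), so \emph{all but finitely many} $\zeta\in\mathcal Z$ satisfy $|\zeta-1|>|r_\ell|$, and for those the ultrametric inequality gives $|(\zeta-1)-r_\ell|=|\zeta-1|\ge p^{-1/(p-1)}$ --- not $|r_\ell|$, as you claim; the finitely many low-order $\zeta$ with $|\zeta-1|\le|r_\ell|$ each contribute a nonzero (hence uniformly positive) value by the hypothesis $\chi(\Theta_j)\neq 0$. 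With this correction your conclusion $\inf_{\zeta}|P_j(\zeta-1)|>0$ does go through, and the remaining ingredients of your write-up (units evaluate to absolute value $1$, finitely many components, bounds combine by min and max) are sound; but as written, the asymptotic behavior of $|\zeta-1|$ --- which you explicitly waved off with ``all we need is $|x|<1$'' --- is exactly the point that must be gotten right, and it is inverted in your proposal.
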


\bigskip

\noindent   {\bf Remark.}     The assumptions in the proposition imply that $\Psi$ is defined and furthermore that $\chi(\Psi)$ is defined and is nonzero for all finite order characters  $\chi$ of $G_{\infty}$.
\medskip

\noindent The proof depends on two lemmas. We let ${\mathcal {Z}}$ denote the set of $p$-power roots of unity in $\overline{\Q}_p$.

\bigskip

\begin{lemma}\label{RGLemma1}
   Suppose that $F(T)$ is a nonzero element of the formal power series ring $\Z_p[[T]]$ and that $F(\zeta-1) \neq 0$  for all $\zeta \in {\mathcal {Z}}$.   Then there exists a positive real number $c$ such that
\begin{equation}\label{upbd4}
c \ \ \le \ \ |F(\zeta-1)| \ \ \le  \ \ 1  \ \ \ \ \ \  \text{for all~}\zeta\, \in \, {\mathcal Z}\,.
\end{equation}
\end{lemma}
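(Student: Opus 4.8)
The statement concerns a nonzero power series $F(T) \in \Z_p[[T]]$ with no zeros among the points $\zeta - 1$, $\zeta \in \mathcal{Z}$; I want two-sided bounds on $|F(\zeta-1)|$ uniform over all $p$-power roots of unity $\zeta$. The upper bound is immediate: every coefficient of $F$ lies in $\Z_p$, so $|a_n| \le 1$, and $|\zeta - 1| \le 1$ since $\zeta - 1$ lies in the maximal ideal of the ring of integers of $\Q_p(\zeta)$ (as recalled in Section~\ref{sec:sub:valuations}); the ultrametric inequality then gives $|F(\zeta-1)| = |\sum a_n (\zeta-1)^n| \le \max_n |a_n|\,|\zeta-1|^n \le 1$. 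The substance is the lower bound, which is a standard compactness/finiteness argument from Iwasawa theory (the $p$-adic Weierstrass preparation theorem).

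**Key steps for the lower bound.** First I would invoke the Weierstrass preparation theorem for $\Z_p[[T]]$: write $F(T) = p^{\mu}\, U(T)\, P(T)$, where $U(T)$ is a unit in $\Z_p[[T]]$, $\mu \ge 0$, and $P(T)$ is a distinguished (monic) polynomial of some degree $\lambda \ge 0$, i.e.\ $P(T) = T^\lambda + b_{\lambda-1}T^{\lambda-1} + \cdots + b_0$ with all $b_i \in p\Z_p$. Since $U$ is a unit, $|U(\zeta-1)| = 1$ for every $\zeta \in \mathcal{Z}$ (a unit power series has $|a_0| = 1$ and all higher terms contribute strictly less because $|\zeta - 1| < 1$). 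Likewise $|p^\mu| = p^{-\mu}$ is a fixed positive constant. So it suffices to bound $|P(\zeta-1)|$ from below uniformly in $\zeta$. Writing $P(T) = \prod_{j=1}^{\lambda}(T - \alpha_j)$ over $\overline{\Q}_p$, we have $|P(\zeta-1)| = \prod_j |\zeta - 1 - \alpha_j|$, and the hypothesis $F(\zeta-1) \neq 0$ guarantees $\zeta - 1 \neq \alpha_j$ for all $j$ and all $\zeta$, so each factor is nonzero — but I need it bounded away from $0$ uniformly.

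**The main obstacle** is exactly this uniformity: a priori there are infinitely many $\zeta \in \mathcal{Z}$, so one cannot just take a minimum. The resolution is that the roots $\alpha_j$ of a distinguished polynomial all satisfy $|\alpha_j| < 1$ (since $|b_0| < 1$ forces $|\alpha_j| < 1$, and more precisely $|\alpha_j| \le |p|^{1/\lambda} < 1$), while $|\zeta - 1|$ takes only finitely many distinct values: if $\zeta$ is a primitive $p^t$-th root of unity then $|\zeta - 1| = p^{-1/(p^{t-1}(p-1))}$, and crucially $|\zeta - 1| \to 1$ as $t \to \infty$. Hence there is a threshold $t_0$ such that for $t \ge t_0$ we have $|\zeta - 1| > \max_j |\alpha_j|$, and then by the ultrametric equality-of-unequal-norms principle $|\zeta - 1 - \alpha_j| = |\zeta - 1| \ge p^{-1/(p^{t_0 - 1}(p-1))}$, giving a uniform lower bound $|P(\zeta-1)| \ge p^{-\lambda/(p^{t_0-1}(p-1))}$ for all such $\zeta$. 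For the finitely many $\zeta$ with $t < t_0$, each value $|P(\zeta-1)|$ is a positive real number (nonzero by hypothesis) and there are only finitely many, so their minimum is a positive constant. Taking $c$ to be $p^{-\mu}$ times the minimum of these two contributions completes the proof. I would present the argument in roughly this order: Weierstrass preparation, disposal of the unit and $p^\mu$ factors, the dichotomy on $t$ versus $t_0$ using $|\alpha_j| < 1$, and finally combining the finite minimum with the uniform tail bound.
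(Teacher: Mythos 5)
Your proof is correct, but it reaches the lower bound by a somewhat different route than the paper. You invoke the $p$-adic Weierstrass preparation theorem to write $F(T)=p^{\mu}U(T)P(T)$ with $P$ distinguished, then factor $P$ over $\overline{\Q}_p$ and use that all roots $\alpha_j$ satisfy $|\alpha_j|<1$ while $|\zeta-1|=p^{-1/((p-1)p^{t-1})}\to 1$, so that for large $t$ each factor satisfies $|\zeta-1-\alpha_j|=|\zeta-1|$; the finitely many small-order $\zeta$ are handled by the nonvanishing hypothesis. The paper instead avoids preparation and roots altogether: it divides $F$ by a power of $p$ so that some coefficient is a unit, lets $\lambda$ be the first such index, and shows by a direct ultrametric comparison of the terms $b_i(\zeta-1)^i$ that for $\zeta$ of sufficiently high order the single term $b_\lambda(\zeta-1)^\lambda$ strictly dominates, yielding the exact value $|F(\zeta-1)|=p^{-\lambda/((p-1)p^{t-1})}$ (this is Iwasawa's class-number-formula argument). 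The two arguments are close in spirit --- your $\lambda=\deg P$ is the paper's $\lambda$, and your computation also gives $|P(\zeta-1)|=|\zeta-1|^\lambda$ exactly for large $t$ --- but the paper's version is self-contained at the coefficient level, whereas yours uses the preparation theorem (and, implicitly, a Newton-polygon fact) as black boxes; conversely yours is slightly shorter given those standard inputs. One small imprecision: the claim that ``$|b_0|<1$ forces $|\alpha_j|<1$'' is not by itself sufficient --- you need \emph{all} non-leading coefficients of $P$ in $p\Z_p$, which is exactly what ``distinguished'' provides --- but since you do have a distinguished polynomial this does not affect the validity of the argument.
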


\medskip

\noindent
The  proof below follows an argument which Iwasawa gave in order to prove his formula   for the power of $p$ dividing the class numbers of the layers in a $\Z_p$-extension of a number field \cite[pp.~92-93]{Iwabook}.  It furthermore precisely determines $\big|F(\zeta-1)\big|$  when $\zeta \in {\mathcal {Z}}$ has sufficiently high order.

 \bigskip

\begin{proof}  The power series $F(T)$  converges at $T=\zeta-1$ because $|\zeta-1| < 1$.    Since $\Z_p$ is closed, we have $F(\zeta-1) \in \Z_p$. This gives the upper bound.
It suffices to establish the lower bound when $\zeta$ is a primitive root of unity of sufficiently high order (since this omits all but a finite number of values, all of which are nonzero).    Without loss of generality, we can divide $F(T)$ by a suitable power of $p$ so that at least one coefficient is in $\Z_p^*$.  That is, we may assume that
  $F(T)= \sum_{i=0}^{\infty}  b_iT^i \in \Z_p[[T]]$ and that
\begin{equation}\label{upbd5}
\big| b_i \big| ~\le ~ p^{-1} ~~~\text{for} ~~i < \lambda\,,  \quad \quad \quad \big| b_{\lambda} \big| ~=~ 1\,,   \quad \quad \quad    \big| b_i \big| ~ \le ~1 ~~~for ~~i ~> ~ \lambda\,.
\end{equation}
for some $\lambda \ge 0$.

Suppose that  $\zeta \in {\mathcal {Z}}$ and $\zeta \neq 1$.  The order of $\zeta$ is $p^t$   for some $t  \ge 1$.   Then
$\zeta-1$ generates the maximal ideal in $\Q_p(\zeta)$, which is a totally ramified extension of $\Q_p$ of degree $(p-1)p^{t-1}$.  Consequently,
\begin{equation}\label{upbd6}
\big| \zeta-1 \big|  \ \ = \ \    p^{-\frac{1}{(p-1)p^{t-1}}} \ \ < \ \ 1\,.
\end{equation}
Note that the left hand side tends to 1 as $t\rightarrow\infty$.

Now we have
\begin{equation}\label{upbd7}
\big| b_{i} (\zeta-1)^i \big|   \ \ = \ \   \big| b_i \big|  \cdot   \Big(p^{-\frac{1}{(p-1)p^{t-1}}} \Big)^i
\end{equation}
for $i \ge 0$.  In the special case $i=\lambda$  this absolute value is   $\Big(p^{-\frac{1}{(p-1)p^{t-1}}}\Big)^{\lambda}$, which approaches 1 as $t\rightarrow \infty$.
Thus for $t$ sufficiently large   we have
\begin{equation}\label{upbd8}
\big| b_{\lambda} (\zeta-1)^{\lambda} \big|  \ \  >  \ \  p^{-1} ~~ \ge ~~ \big| b_{i} (\zeta-1)^i \big| \  \ \ \ \ \ \ \text{for} \ i\,<\,\l\,.
\end{equation}
We also have
\begin{equation}\label{upbd9}
\big| b_{\lambda} (\zeta-1)^{\lambda} \big|  ~=~    \Big(p^{-\frac{1}{(p-1)p^{t-1}}}\Big)^{\lambda}~>~ \Big(p^{-\frac{1}{(p-1)p^{t-1}}}\Big)^{\lambda + 1}  ~\ge ~   \big| b_{i} (\zeta-1)^i \big|
\end{equation}
for all $i \ge \lambda+1$.    Thus, when  $t$ is sufficiently large,  the term for $i=\lambda$ in (\ref{upbd7}) strictly dominates all the other terms in absolute value and consequently
\begin{equation}\label{upbd10}
\big| F(\zeta-1) \big|   ~~=~~  \big| b_{\lambda} (\zeta-1)^{\lambda} \big|  ~~=~~ \Big(p^{-\frac{1}{(p-1)p^{t-1}}}\Big)^{\lambda}~~=~~  p^{-\frac{\lambda}{(p-1)p^{t-1}}}.
\end{equation}
This last quantity tends to 1 as $t\rightarrow \infty$, completing the proof.\bx
\end{proof}

\begin{lemma}\label{RGLemma2}  Let $F(T)$ be a nonzero element of the fraction field of $\Z_p[[T]]$ for which $F(\zeta-1)$ is well-defined and nonvanishing for all $\zeta\in {\mathcal Z}$.  Then $\{|F(\zeta-1)| ~~\big| ~\zeta\in {\mathcal Z}\}$ is bounded above and below.
\end{lemma}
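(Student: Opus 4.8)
The plan is to reduce the claim to \lemref{RGLemma1}, applied separately to a numerator and a denominator. Since $\Z_p[[T]]$ is a unique factorization domain (a standard consequence of the Weierstrass preparation theorem), any nonzero element of its fraction field can be written $F(T)=G(T)/H(T)$ with $G,H\in\Z_p[[T]]$ nonzero and relatively prime. The first task is to transfer the hypotheses on $F$ to $G$ and $H$ individually, that is, to show that $G(\zeta-1)\ne 0$ and $H(\zeta-1)\ne 0$ for every $\zeta\in{\mathcal Z}$.

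For this, fix $\zeta\in{\mathcal Z}$ of order $p^t$. The minimal polynomial of $\zeta-1$ over $\Q_p$ is $T$ when $t=0$ and the shifted cyclotomic polynomial $\Phi_{p^t}(T+1)$ when $t\ge 1$; in either case it is a distinguished polynomial, Eisenstein at $p$, hence irreducible and therefore prime in $\Z_p[[T]]$, and (by Weierstrass division) it divides an element of $\Z_p[[T]]$ precisely when that element vanishes at $T=\zeta-1$. Hence, if $H(\zeta-1)=0$, this prime divides $H$ but, by the coprimality of $G$ and $H$, does not divide $G$, so $F$ has a genuine pole at $\zeta-1$ --- contradicting the assumption that $F(\zeta-1)$ is well-defined. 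Thus $H(\zeta-1)\ne 0$ for all $\zeta\in{\mathcal Z}$, and then $0\ne F(\zeta-1)=G(\zeta-1)/H(\zeta-1)$ forces $G(\zeta-1)\ne 0$ for all $\zeta\in{\mathcal Z}$ as well.

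With this in hand, $G$ and $H$ are nonzero elements of $\Z_p[[T]]$ to which \lemref{RGLemma1} applies, yielding constants $c_G,c_H>0$ with $c_G\le|G(\zeta-1)|\le 1$ and $c_H\le|H(\zeta-1)|\le 1$ for all $\zeta\in{\mathcal Z}$. Dividing the two chains of inequalities gives $c_G\le|F(\zeta-1)|\le 1/c_H$ for all $\zeta\in{\mathcal Z}$, the desired two-sided bound. The one step needing care --- really the only obstacle --- is the passage from ``$F(\zeta-1)$ well-defined and nonvanishing'' to ``$G(\zeta-1)$ and $H(\zeta-1)$ both nonzero'', which is where coprimality of $G$ and $H$ and the primality of the (shifted) cyclotomic polynomials in $\Z_p[[T]]$ are used; everything else is immediate from \lemref{RGLemma1}. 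One can even sidestep unique factorization by reading ``well-defined'' as the statement that $F$ is presented as a ratio $G/H$ with $H(\zeta-1)\ne 0$ for all $\zeta\in{\mathcal Z}$, in which case the two \lemref{RGLemma1} estimates apply directly to $G$ and $H$ with no preliminary work.
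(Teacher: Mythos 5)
Your proposal is correct and follows essentially the same route as the paper: write $F=G/H$ with $G,H\in\Z_p[[T]]$ relatively prime, use coprimality to conclude that neither factor vanishes at $\zeta-1$ for $\zeta\in\mathcal{Z}$, and then apply the two-sided bound of \lemref{RGLemma1} to numerator and denominator separately. The only difference is cosmetic: you justify the non-simultaneous vanishing explicitly via the primality of the shifted cyclotomic polynomials in $\Z_p[[T]]$, whereas the paper simply invokes the standard fact that coprime power series have no common zero in $\overline{\Q}_p$.
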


\begin{proof}   Write $F(T)=F_1(T)/F_2(T)$, where $F_1(T),  F_2(T)\in \Z_p[[T]]$ are relatively prime.  Since $F_1(T)$ and $F_2(T)$ are relatively prime, they cannot simultaneously vanish on an element of $\overline{\Q}_p$.  Thus, by assumption on $F(T)$, neither power series vanishes at $\zeta-1$ for $\zeta\in \mathcal Z$.  The claim now follows from   (\ref{upbd4}). \bx
\end{proof}

\noindent To prove  Proposition~\ref{RGProp1},  we argue as follows.     Choose $F_{(i,1)}(T)$, $F_{(i,2)}(T)\in \Z_p[[T]]$ such that the projections $e_{\omega^i}\Theta_j$ correspond to $F_{(i,j)}(T)\in \Z_p[[T]]$ under the isomorphism   $\Lambda=\Z_p[[\Gamma]]\cong \Z_p[[T]]$ described after (\ref{IwasawaalgdefLambda}). Recall that  $T$ is identified with $\gamma-id_{\Gamma}$ in that isomorphism.  If $\chi$ is a finite order character of $\Z_p^*$, then $\zeta=\chi(\gamma)$ is necessarily a $p^t$-th root of unity  for some $t\ge 0$.   Furthermore, $\chi=\chi_{_\Gamma} \omega^i$ for some  $0 \le i < p-1$ as at the end of Section~\ref{sec:sub:valuations}.   For that value of $i$, we have
\begin{equation}\label{chionLaurent}
     \chi(\Theta_j) \ \ = \ \    F_{(i,j)}(\zeta-1)
\end{equation}
for $j=1,2$.    We have assumed these quantities do not vanish for any $i$:
\begin{equation}\label{Fisnonvanishing}
    F_{(i,1)}(\zeta-1)\,, \  F_{(i,2)}(\zeta-1) \ \ \neq \ \ 0 \ \ \ \ \ \text{for} \ \  \ 0\,\le\,i\,<\,p-1\,.
\end{equation}
  Furthermore varying $\chi$ over all finite order characters, the nonvanishing statement (\ref{Fisnonvanishing}) holds for any $\zeta\in \cal Z$.  Thus Lemma \ref{RGLemma2} implies that $|F_{(i,1)}(T)/F_{(i,2)}(T)|$ is bounded above and below, from which it follows that $|\chi(\Psi)|$ is as well.  This proves Proposition~\ref{RGProp1}.
\bigskip

Finally, we now deduce  the inequality  (\ref{upperbd}) from Proposition~\ref{RGProp1}.  Suppose $\Psi$ is as in the proposition, so that there exists some $d>0$ with   $|\chi(\Psi)| < d$ for all finite order characters $\chi$ of $\Z_p^*$.
As in Section~\ref{sec:sub:idempotents}, we may
 obtain a distribution $\nu$    on $\Z_p^*$ represented by the compatible sequence $\{\psi_m\}$ associated to $\Psi$ by writing $\psi_m = \sum_{g \in G_m} ~a_g g$, where
\begin{equation}\label{upbd2}
a_g  ~=~     \frac{1}{\#G_m}    \sum_{\chi} ~ \chi^{-1}(g)\chi(\psi_m) \ \ = \ \
\!\nu(b+p^m\Z_p)
\end{equation}
for $g = b+p^m\Z_p$, an element of $G_m$ when $p \nmid b$.
Since the roots of unity $\chi(g)$ have valuation 1 and $\#G_m = (p-1)p^{m-1}$, the ultrametric inequality yields the bound
\begin{equation}\label{upbd3}
\big|\nu(b+ p^m \Z_p) \big| \  \ = \ \ |a_g |  \ \  \le   \ \   d \,|  \#G_m  |^{-1}   \ \ = \ \   c_2 p^m
\end{equation}
for some positive real  number $c_2$.
We obtain the inequality  (\ref{upperbd}) from   (\ref{upbd3})  by taking $\Psi =  \Psi_{Iw}^{-1}$.  For this choice of  $\Psi$,  the assumptions in Proposition~\ref{RGProp1} are clearly satisfied and, by definition, we have $\nu = \mu_k^*$.

\section{Proof of the lower bound}\label{sec:lowerbd}

In this section we will prove that
\begin{equation}\label{freydoon1}
\max_{b\,\in\,(\Z/p^m\Z)^*} |\mu_k^*(b+p^m\Z_p)|_p \ \ \geq \ \  c_1\,p^m \ ,  \ \ \ \ \ m\,\ge \,1\,,
\end{equation}
for some positive constant $c_1$ which depends only on the irregular prime $p$.
The distribution $\mu_k^*$ corresponds to the element $\Theta= \Psi_{Iw} ^{-1}\in \mathcal F$ as in section~\ref{sec:sub:psiIw}.
The definition of $\mu_k^*$ shows that the maximum is nonzero, and so it suffices to prove (\ref{freydoon1}) for sufficiently large $m$.

Recall that the distribution $\mu_k^*$ corresponds to a compatible sequence of elements in the rings $\Q_p[G_m]$, namely the sequence $\{\psi_{Iw,m}^{-1}\}$.  We shall continue to denote
  $\psi_{Iw,m}^{-1}$ by $\xi_m$ for brevity.
For every character $\chi$ of $G_m$ and every $m\ge 1$, equation (\ref{Bchidef2}) shows that $\chi(\xi_m)=b_\chi^{-1}$,  which is a nonzero number by definition (\ref{Bchidef1}).  This nonvanishing is essential for defining $\Psi\in {\mathcal F}$ (see Section~\ref{sec:sub:Iwasawaalg}).

The ring ${\mathcal F}$ is a direct sum of $p-1$ copies of ${\mathcal L}$, the field of fractions of the ring $\Lambda=\Z_p[[\Gamma]]$, where $\Gamma=1+p\Z_p$.
The ring $\Lambda$ is isomorphic to the formal power series ring $\Z_p[[T]]$ and is a unique factorization domain.
The decomposition corresponds to the $p-1$ idempotents $e_{\omega^i}$ for the group $\Delta$, where $0\leq i\leq p-2$.
Thus, $\Psi_{Iw}\i$ has $p-1$ projections in copies of ${\mathcal L}$.
As in the proof of Lemma~\ref{RGLemma2} in Section~\ref{sec:upperbd}, we denote these projections by $F_{(i,1)}(T)/F_{(i,2)}(T)$, where naturally
  $F_{(i,1)}(T)$ and $F_{(i,2)}(T)$ are taken to be relatively prime elements of $\Z_p[[T]]$.

Recall from Section~\ref{sec:sub:psiIw}  that the element $\Psi_{Iw}$ of ${\mathcal F}$ gives the $p$-adic $L$-function attached to $\chi$ by (\ref{Bchidef1}) as $\chi(\Psi_{Iw})$.
The Ferrero--Washington  theorem \cite{FerrWash} applies to our situation since we are assuming that $p$ is irregular (as we must, for otherwise the inequality (\ref{freydoon1}) is false).  It asserts that for irregular primes $p$, there exists an index $i$ such that $F_{(i,2)}/F_{(i,1)}$  --- the element of $\mathcal L$ corresponding to the $i$-th projection of $\Psi_{Iw}$ ---  has a zero in some field extension.  By the $p$-adic Weierstrass preparation theorem \cite[\S7.1]{washington}, this $F_{(i,2)}(T)$ can be factored as
$
F_{(i,2)}(T)=p^a u(T) g(T)
$, where
 $u(T)$ is an invertible element in $\Lambda$, $a\geq 0$ an integer, and $g(T)$ is a polynomial of strictly positive degree having a root $\beta$ in the maximal ideal $\frak m$
of the ring of integers $\mathcal O$ of some finite extension of $\Q_p$ (i.e., $|\beta| < 1$).
As $F_{(i,1)}(T)$ and $F_{(i,2)}(T)$ are relatively prime, $F_{(i,1)}(\beta)\neq 0$.

Since $1+\beta$ is an element of the subgroup $1+\frak m$  of the unit group ${\mathcal O}^\times$ of $\mathcal O$, we may define a homomorphism
\begin{equation}\label{RGphibetas}
\aligned
\varphi \ & \colon \  \Gamma \ \ \to  \ \ 1+\frak m \\
\varphi \ & \colon \ \gamma^{\,s} \ \ \mapsto \ \ (1+\beta)^s\,,
\endaligned
\end{equation}
where $\gamma$ is the topological generator of $\Gamma$ and $s$ varies over $\Z_p$; it is continuous since $1+\frak m$ is a pro--$p$--group.  Furthermore, for $i$ as above consider the homomorphism
$$
\chi\colon G_\infty\to{\mathcal O}^\times
$$
given by $\chi=\omega^i\varphi$, which
  makes sense because $G_\infty=\Delta\times\Gamma$ and $\omega^i$ is a character of $\Delta$ with values in $\Z^\times_p\subseteq{\mathcal O}^\times$.
(Note that $\varphi$ and $\chi$ may not necessarily have finite order, because $1+\beta$ may not be a root of unity.)
As before, formula (\ref{extendchi}) gives an extension of $\chi$ to a continuous $\Z_p$--algebra homomorphism:\  $\Z_p[[G_\infty]]\to{\mathcal O}$.

We now specialize the above  discussion to $\Psi_{Iw}\i$, which we write as the quotient $\Theta_1/\Theta_2$, where $\Theta_1$, $\Theta_2\in R$ satisfy
\begin{equation}\label{eqn:chi(Theta)}
\chi(\Theta_2) ~=~ 0 \quad \quad \quad \text{and} \quad \quad \quad \chi(\Theta_1) ~ \neq ~0 ~~.
\end{equation}
Both  $\Theta_1$ and $\Theta_2$ correspond to compatible sequences $\{\theta_m^{(1)}\}$ and $\{\theta_m^{(2)}\}$ of elements in $\Z_p[[G_m]]$ for varying $m$. Also, $\Psi_{Iw}\i=\Theta_1 \big/ \Theta_2$ corresponds to a compatible sequence $\{\psi_{Iw,m}\i\}$, where $\psi_{Iw,m}\i=\theta_m^{(1)}/\theta_m^{(2)}$.    We will now interpret (\ref{eqn:chi(Theta)}) in terms of these sequences.

The coefficients of $\psi_{Iw,m}^{-1}$ are the numbers $\mu^*_k(b + p^m \Z_p)\in \Q_p$, for $b\in (\Z/p^m\Z)^*$.  It is thus clear that the numbers $t_m$ defined by
\begin{equation}\label{ptmdef}
p^{t_m} ~=~
\max_{\srel{ 1 \le b \le p^m}{p \, \nmid \, b}}\( \big|  \mu^*_k(b + p^m \Z_p) \big|_p \)
\end{equation}
are integers for which
\begin{equation}\label{ptmpsimintegral}
\tau_m \ \ := \ \ p^{t_m}\,\xi_{m}   \ \ \in  \ \ \Z_p[G_m]\,.
\end{equation}
 Let $T_m$ be any lifting of $\tau_m$ to the ring $\Z_p[[G_{\infty}]]$; thus $T_m$ maps to $\tau_m$ under the natural map from $\Z_p[[G_{\infty}]]$ to $\Z_p[G_m]$.    The kernel of the group homomorphism  $G_{\infty} \to G_m$ is topologically generated  by $\gamma^{p^{m-1}}$, and the kernel of the ring homomorphism  $\Z_p[[G_{\infty}]]  \to  \Z_p[G_m]$ is the ideal in $\Z_p[[G_{\infty}]]$ generated by $\gamma^{p^{m-1}}-1$.

By definition, we have the equality
\[
\theta_m^{(2)} \tau_m ~=~   p^{t_m}\theta_m^{(1)}
\]
in the ring $\Z_p[G_m]$,  and therefore the following congruence in the ring $\Z_p[[G_{\infty}]]$:
\[
\Theta_2 \, T_m  ~ \equiv  ~ p^{t_m} \Theta_1 ~~ \pmod{ (\gamma^{p^{m-1}}-1)}\,.
\]
Consequently $p^{t_m}\Theta_1  =  \Theta_2 T_m  +   \eta_m \big( \gamma^{p^{m-1}}-1\big)$ for some $\eta_m \in \Z_p[[G_{\infty}]]$. Now apply $\chi$ to this equation and use the fact that $\chi(\Theta_2)=0$ to obtain
\[
p^{t_m} \chi(\Theta_1)   ~ \equiv ~ 0 ~ \pmod{{(1+\beta)^{p^{m-1}} -1}}
\]
in the ring ${\mathcal O}$.  Since $\chi(\Theta_1)$ is nonzero and independent of $m$, the valuation of the left side is $p^{-t_m -c}$ for some constant $c\in \Z$. The valuation of $(1+\beta)^{p^{m-1}}-1$ is
\begin{equation}\label{binomialexpansion}
\aligned
   |(1+\beta)^{p^{m-1}}\,-\,1| \ \ & = \ \ \left| \sum_{j=1}^{p^{m-1}}{{p^{m-1}}\choose{j}} \beta^j \right|\\
& \le \ \   \max_{1\,\le\,j\,\le\,p^{m-1}} \left|{{p^{m-1}}\choose{j}} \beta^j\right| \ \  = \ \   |p^{m-1}| \max_{1\,\le\,j\,\le\,p^{m-1}} \left|\f{\beta^j}{j}\right|\,,
\endaligned
\end{equation}
 which is less than a constant times $p^{-m}$ since $|\beta|<1$ and the exponent of the highest power of $p$ dividing $j$ grows at most logarithmically in $j$.   Thus $t_m-m$ is bounded below by a constant, as was to be shown.

\bigskip

\noindent
{\bf Remark:}
The argument given here applies more generally to the distribution $\nu$ associated to an element $\Psi=\Theta_1/\Theta_2$, where $\Theta_1,\Theta_2\in R$ satisfy the following assumptions.  First, we must assume that $\chi(\Theta_2)\neq 0$ for all finite order characters $\chi$ of $\Z_p^*$.  This is needed in order to define the distribution $\nu$.  In addition, we assume the existence of a continuous homomorphism $\chi:\Z_p^*\to\overline{\Q_p}^*$ such that $\chi(\Theta_2)=0$ and $\chi(\Theta_1)\neq 0$.  In terms of the projections  to the various $p-1$ copies of $\Lambda$ in the direct sum decomposition of $R$, this assumption amounts to the following requirement on the relatively-prime elements $F_{i,1}(T)$ and $F_{i,2}(T)\in \Z_p[[T]]$ corresponding to the images of $\Theta_1$ and $\Theta_2$:~there must exist at least one value of $i$ for which $F_{i,2}(T)$ can be written as $p^{\mu_i}G_i(T)$, where $\mu_i\in\Z$, $G_i(T)\in \Z_p[[T]]$ has at least one coefficient in $\Z_p^*$, but $G_i(0)\notin\Z_p^*$.

\bibliographystyle{plain}

\end{document}